\NeedsTeXFormat{LaTeX2e}
\documentclass[a4paper,12pt]{amsart}
\usepackage{enumerate}
\usepackage{amssymb,latexsym,amsxtra,amscd,amsfonts}
\usepackage[mathscr]{eucal}
\usepackage{dsfont}
\usepackage{calrsfs}


\pagestyle{plain}

\footskip=10mm





\newcommand{\thmref}[1]{Theorem~\ref{#1}}

\newcommand{\lemref}[1]{Lemma~\ref{#1}}
\newcommand{\prref}[1]{Proposition~\ref{#1}}
\newcommand{\corref}[1]{Corollary~\ref{#1}}


\makeatletter
\def\@biblabel#1{#1.}
\makeatother
\newcounter{obr}[section]

\newcounter{pvv}[section]
\renewcommand{\thepvv}{\thesection.\arabic{pvv}}
\newenvironment{pv}[2][]{\begin{trivlist}\refstepcounter{pvv}%
\item[\hspace{\labelsep}\normalfont\bfseries\thepvv. #2%
  \def\tmp{#1}\ifx\tmp\empty\else{} (#1)\fi.]}%
{\end{trivlist}}
\newenvironment{df}{\begin{pv}{Definition}}{\end{pv}}
\newenvironment{theo}[1][]{\begin{pv}[#1]{Theorem}\begin{itshape}}{\end{itshape}\end{pv}}
\newenvironment{pr}{\begin{pv}{Proposition}\begin{itshape}}{\end{itshape}\end{pv}}
\newenvironment{lem}{\begin{pv}{Lemma}\begin{itshape}}{\end{itshape}\end{pv}}

\newenvironment{cor}{\begin{pv}{Corollary}\begin{itshape}}{\end{itshape}\end{pv}}

\newcommand{\set}[2]{\ensuremath{\{ #1\,:\; #2\}}}

\newcommand{\nn}{\ensuremath{\mathbb N}}

\newcommand{\cc}{\ensuremath{\mathbb C}}

\newcommand{\8}{\ensuremath{\infty}}

\newcommand{\f}{\ensuremath{\varphi}}
 
\renewcommand{\l}{\ensuremath{\lambda}}
\newcommand{\ro}{\ensuremath{\varrho}}

\newcommand{\un}{\ensuremath{{\bf 1}}} 


\newcommand{\al}{\ensuremath{\alpha}}

\newcommand{\ifff}{if, and only if,}
\newcommand{\vNa}{von Neumann algebra}

\newcommand{\Ca}{$C^\ast$-algebra}
\newcommand{\Csa}{$C^\ast$-subalgebra}

\newcommand{\HS}{Hilbert space}

\newcommand{\st}{such that}

\newcommand{\seq}[2]{\ensuremath{#1_1, \ldots, #1_{#2}}}

\newcommand{\kol}[1]{\ensuremath{#1^{\perp}}}

\newcommand{\nor}[1]{\ensuremath{\|#1\|}}

\newcommand{\AW}{$AW^\ast$-algebra}
\newcommand{\GT}{Gleason's Theorem}
\newcommand{\WT}{Wigner's Theorem}
\newcommand{\DT}{Dye's Theorem}

\begin{document}
 \begin{center}
 {\bf \Large \DT{} and \GT{} for \AW s. } \\[2cm]

 Jan Hamhalter \\
 Czech Technical University\\
 Faculty of Electrical Engineering\\
 Technicka 2, 166 27, Prague 6\\
 hamhalte@math.feld.cvut.cz
 \end{center}

 \vspace{5mm}
 
 {\small Abstract: We prove that any map between projection lattices of \AW s $A$ and $B$, where $A$ has no Type $I_2$ direct summand, that preserves orthocomplementation and suprema of arbitrary elements,  is a restriction of a normal Jordan $\ast$-homomorphism between $A$ and $B$. This allows us to generalize \DT{}  from \vNa s to \AW s. We show that Mackey-Gleason-Bunce-Wright Theorem  can be extended to homogeneous \AW s of Type I. The interplay between \DT{} and \GT{} is shown. As an application we prove  that Jordan $\ast$-homomorphims  are commutatively determined. Another corollary says that Jordan parts of \AW s can be reconstructed from  posets of their abelian subalgebras.   }

 \section{Introduction}

   The main goal of the present  paper is to show that any map between projection lattices 
   of \AW s that preserves orthocomplementation and arbitrary suprema is
 a restriction of  a normal Jordan $\ast$-homomorphism. This generalizes famous Dye's Theorem in a few directions. Moreover, we contribute to the  Mackey-Gleason problem by showing that any bounded vector measure on the  projection lattice of \AW{} of finite Type $I_n$, 
 ($n\ge 3$), extends to a bounded linear map.     
     Besides its own mathematical interest, this line of the research   stems also from a long  discussion on mathematical understanding of    quantum theory. There are two basic principles of mathematical foundations of quantum mechanics   - Gleason's Theorem and Wigner's Theorem. These results are very nontrivial,  even in the  the most special context of matrix algebras (quantum systems with finitely many  levels). In this setting they read as follows. Gleason's Theorem states that any probability measure  on the projection structure, $P(M_n(\cc))$,  of the matrix algebra $M_n(\cc)$, $n\ge 3$, of all complex $n$ by $n$ matrices,  extends to
a positive linear functional on $M_n(\cc)$ \cite{Gleason}. Loosely   speaking, it says that any quantum   probability measure has its expectation value (integral). On the other hand, Wigner's Theorem \cite{Wigner} (in its Ulhron's version \cite{Ulhron}) says that any bijection \f{} acting on $P(M_n(\cc))$, $n\ge 3$, that preserves orthogonality in both directions  is implemented by a unitary or anti-unitary operator $U$ in the sense 
\[ \f(P)= U^{-1}PU\,, \qquad P\in P(M_n(\cc))\,.\]
It means  that any transformation that preserves logical structure of the system of quantum propositions is in fact a geometric transformation of the underlying inner product space. 
 \GT{} and \WT{} have one thing in common. They show  that a particular mathematical object attached to projections can be viewed globally as attached to the linear structure of the whole space or algebra. Responding to a  well known Mackey's problem in axiomatic of quantum theory \cite{Mackey},
 Gleason showed in \cite{Gleason} that any probability measure on the structure $P(H)$ of projections  that act on a separable \HS{} $H$,  extends to a positive functional on the algebra $B(H)$ of all bounded operators acting on $H$. After considerable effort of many outstanding  mathematicians 
 \cite{Aarnes,Christensen,Pasz,Yeadon1,Yeadon2}, \GT{} has been extended to positive (finitely additive) measures on projection lattices of general von Neumann algebras (see also survey \cite{qmt,Maeda} and references therein). However, in order to obtain \GT{} for  vector measures it was necessary to relax positivity assumption and to prove the result for general complex-valued measures. 
This progress   required further difficult  ideas and techniques. It was achieved  in a remarkable series of works by Bunce and Wright \cite{Bunce1,Bunce2,Bunce4}.  This development has led to the
Mackey-Gleason-Bunce-Wright Theorem:  Let $M$ be a \vNa{} without Type $I_2$ direct summand
with projection lattice $P(M)$. Any bounded  map $\ro:P(M)\to X$ with values in a Banach space $X$ \st{}
\[ \ro(p+q)=\ro(p)+\ro(q)\,,\]
whenever $p$ and $q$ are orthogonal projections, extends to a bounded operator $T:M\to X$. 

Despite the progress in establishing linear extensions of measures for \vNa s, not much is known about solution of Mackey-Gleason problem for general \Ca s. One of the most known results in this direction is Haagerup's theorem saying that any quasi trace on exact \Ca{} is linear \cite{Haagerup}.
\\

   \WT{} has an intriguing history as well. The main role has been played by the following reformulation of this result:  
  Any orthoisomorphism of the  orthomodular lattice  $P(H)$, $\dim H\ge 3$, extends to 
  a Jordan $\ast$-isomorphism of the algebra $B(H)$. Remarkable \DT{} \cite{Dye} extends this to a very general context of  \vNa s: Let $M$ ad $N$ be \vNa s with projection lattices 
  $P(M)$ and $P(N)$, respectively. If $M$ has no Type $I_2$ direct summand, then any orthoisomorphism $\f:P(M)\to P(N)$ extends to a unique Jordan $\ast$-isomorphism $\Phi:M\to N$. \DT{} is one of the  deepest results on the geometry of projections in \vNa s. 
 The arguments in the proof of \DT{} rely on geometry of matricidal structures over \vNa s and on applying special  lattice polynomials that, surprisingly, have the power to capture  linear structure. Some of these ideas have their origin in von Neumann work on projective geometry \cite{vN-projective geometry}.  In the proof of \DT{} the bijectivity of orthoisomorphism is used in an essential way.\\

       The proofs of \GT{} and \DT{} were independent for a long time.  However, a clever argument was given by Bunce and Wright \cite{Bunce3} to the effect that \GT{} for positive measures on \vNa s  implies quickly \DT{}. Moreover, it was shown  in \cite{Bunce3} that any map $\f:P(M)\to P(N)$ between projection lattices of \vNa s $M$ and $N$, where $M$ has no  Type $I_2$ direct summand,
    is a restriction of Jordan $\ast$-homomorphism between $M$ and $N$ if and only if 
  $\f(p+q)=\f(p)+\f(q)$ for any  pair of orthogonal projections $p$ and $q$ in $M$.  
Recently, the problem of linear extensions of projection lattice morphisms has been investigated in the context of \AW s by Heunen and Reyes \cite{Active}. They succeeded in proving the following deep result. Any map $\f: P(A)\to P(B)$ between projection lattices of \AW s
$A$ and $B$, where $A$ has no  Type $I_2$ direct summand, that preserves 
arbitrary suprema and orthocomplements,  extends to a normal Jordan $\ast$-homomorphism between $A$ and $B$ if and only if the following equivariance condition holds:
\begin{equation}\label{equivar}
 \f((\un-2p)q(\un-2p))= (\un-2\f(p))\f(q)(\un-2\f(q))\,.
 \end{equation} 
 The authors posed the following open problem in \cite{Active}:
Does any 
morphism \f{}  specified above satisfy condition (\ref{equivar}) automatically? 
 We answer this problem in the positive.\\

 In order to obtain \DT{} for \AW s, one might be  tempted to follow ideas of Bunce and Wright and to try to establish the \GT{} for \AW s first. But this way seems to be very hard.
 For example, if the Mackey-Gleason problem has positive solution for \AW s, then any dimension function  on Type $II_1$ $AW^\ast$-factor extends to a trace. But the existence of such a trace would imply that any factorial \AW{} of Type $II_1$ is a von Neumann factor. This would solve difficult  
  Kaplansky's problem \cite{Kaplan}.
 However, fortunately, condition (\ref{equivar}) involves only two projections. We shall carefully examine 
 the structure of $AW^\ast$-subalgebras   generated by  two projections, and establish Mackey-Gleason-Bunce-Wright Theorem for this case. This implies  that  (\ref{equivar})  holds for any  map between projection lattices of \AW s (not having Type $I_2$ direct summand) that preserves arbitrary suprema of projections  and orthocomplements. Our main theorem then reads as follows. {\em Let $A$ be an \AW{} without Type $I_2$ direct summand, $B$ be an \AW{}, and  let $\f: P(A)\to P(B)$ be a map between projection lattices that preserves arbitrary suprema and orthocomplements. Then \f{} is the restriction of a normal Jordan $\ast$-homomorphism $\Phi: A\to B$.} Moreover, this result allows us to show that normal Jordan $\ast$-homomorphisms between  $AW^\ast$-algebras are commutatively determined, that is, a map (linear or not) is a a normal Jordan $\ast$-homomorphism if it is a normal Jordan $\ast$-homomorphism when restricted to  any abelian subalgebra. Besides,  we establish Mackey-Gleason-Bunce-Wright Theorem for \AW s of Type $I_n$, $3\le n<\8$,  as well. (The case of properly infinite algebras will be treated in a subsequent paper.) \\
 
   Let us remark that \AW s seem to be  more natural  for Mackey-Gleason program as well as for  logical considerations  on  quantum theory than \vNa s. For example, any quasitrace on a \Ca{} is a composition of a $\ast$-homomorphism and a quasitrace on a finite \AW{} \cite{Blackadar}.  Therefore, \AW s play a key role in linearity problem for quasi traces. On the other hand, only in the category of    \AW s we have a perfect bijective correspondence between commutative \AW s and complete Boolean algebras. In this correspondence \AW{} is sent to its projection lattice. This underlines crucial role  of \AW s for logical structures.   Finally, \AW s are more suitable for recent topos theoretic approach to 
   quantum theory (see e.g. \cite{Deep}).  This approach is based on the    structure $Abel(A)$ of commutative $C^\ast$-subalgebras of a given \Ca{}  $A$, ordered by the set inclusion. It was shown in \cite{Doring,Ham-Abel} that $Abel(A)$ determines the Jordan structure of a von Neumann algebra $A$. As an application of \DT{} for \AW s, we show that the same holds for \AW s: {\em Let $A$ be an \AW{} without Type $I_2$ direct summand and $B$ be any \AW.  Suppose that $\f:Abel(A)\to Abel(B)$   
   is an order isomorphism.  Then there is a unique Jordan $\ast$-isomorphism $\Phi:A\to B$ \st{}
   \[ \f(C)= \Phi(C)\,,\qquad  C\in Abel(A)\,.\]}
   This generalizes hitherto known results in \cite{Ham-Abel,Ham-ET}. \\

   The paper is organized as follows. In the second section we deal with the the geometry of the structure of projections in \AW s, especially we describe  \AW s generated by two projections and analyze  isoclinicity of projections. Inclusions of two by two matricidal substructures into \AW s is 
   examinated. In Section~3 we establish \GT{} for finite homogeneous 
   \AW s. This enables us to show linearity of quasi linear functionals on subalgebras generated by two projections. In the concluding section main results described above are presented.\\
   
   Let us now recall basic notions and fix the notation.
   For all unmentioned details on operator algebras we refer the reader to
  monographs \cite{Berberian,Kadison-Ringrose}. 
For a \Ca{} $A$ we shall denote by $\un$ its unit (if it exists). By $A_{sa}$ we shall understand  the real subspace of $A$ consisting of all self-adjoint elements. 
We write $A^+$ and $A_1$ for the positive part  and the closed unit ball of $A$, respectively.
By $P(A)$ we shall denote the set of all projections in $A$; that is the set of all self-adjoint idempotents. $P(A)$ is ordered by order relation $e\le f$ if $ef=e$. Suprema and infima of two projections $e$ and $f$ will be denoted by  
$e\lor f$ and $e\land f$, respectively (if they exist). An orthocomplement
$\kol{e}$ of a projection $e$ is defined as   $\kol{e}=\un -e$. 
The {\em central cover}, $c(e)$, of a projection $e$ is a smallest central projection $z$ for which $z\ge e$. We say that two projections are {\em very orthogonal} if their central covers are orthogonal.
A projection is called {\em faithful} if its central cover is the unit.
A projection $e\in A$ is called abelian if the hereditary  
subalgebra $eAe$ is abelian. 
 Finally,  two projections $e$ and $f$ are said to be {\em equivalent} (in symbols $e\sim f$) if there is an element $v$ (partial isometry) \st{} 
$vv^\ast=e$ and $v^\ast v =f$. By a {\em symmetry} we mean a self-adjoint element $s$ with $s^2=1$.  
Given a \Ca{} $A$, we shall write $M_n(A)$ for the \Ca{} of all $n$ by $n$ matrices with entries from $A$. If $X$ is a compact Hausdorff space, then by $C(X,A)$ we denote the \Ca{} of all continuous maps from $X$ to $A$. If $A=\cc$, we write simply $C(X)$.

   A Jordan $\ast$-homomorphism $\Phi$ is a linear map between two \Ca s that preserves $\ast$ operation and squares of self-adjoint elements. Jordan $\ast$-isomorphism is a Jordan $\ast$-homomorphism that is bijective and whose inverse is also a Jordan $\ast$-homomorphism. 
 An {\em \AW{}} is a \Ca{} $A$ that is a {\em Bear $\ast$-ring}. That is,  if the following holds: For any nonempty subset $S\subset A$ there is a   projection $e\in A$ such that for the right annihilator
 $R(S)=\set{a\in A}{sa=0\mbox{ for all }s\in S}$ we have that  $R(S)=eA$. 
Throughout the paper $A$ will always represent \AW. 
Given an element $a\in A$ there exists a {\em left support projection} $LP(a)$ of $a$  which is a smallest projection $g\in A$ \st{} $ga=a$. Analogously, there is a {\em right support projection} $RP(a)$, that is a smallest projection $h$ with
$ah=h$. Explicitly, $R(\{a\})=(1-RP(a))A$. It is known that $P(A)$ is a complete lattice.  
Given elements $\seq an{}\in A$ we write $AW^\ast(\seq an)$ for the smallest $AW^\ast$-subalgebra of $A$ containing elements $\seq an$.

 \section{Geometry of projections in \AW s}

 \begin{df}
 Let $e$ and $f$ be projections in a \Ca{}  $C$. We say that $e$ and $f$ are isoclinic with angle $\al$, $0<\al<\frac \pi 2$, if
   \[  efe= \cos^2\al\,  e \quad {\rm and }\quad   fef = \cos^2\al\,  f\,.\]
    \end{df}

 The following Proposition gathers important facts about isoclinic projections. The proofs can be found in \cite[p. 129-130]{qmt} and \cite{Maeda}.\\

 \begin{pr} Let $e$ and $f$ be projections in a \Ca{} $C$ that are isoclinic with angle \al.
 Then the following statements are true:
 \begin{enumerate}
 \item \Ca{}  $C^\ast(e,f)$ generated by $e$ and $f$  is $\ast$-isomorphic to $M_2(\cc)$.
\item $e$ and $f$ are unitarily equivalent in $C^\ast(e,f)$. 
\item $\nor{e-f}=\sin \al.$
\end{enumerate}
\end{pr}

  We are going to analyze the position of two projections in a general \AW{} $A$. 
  Let us recall a few notions (see \cite{Berberian}). Two projections $e$ and $f$ are said to be in  position $p'$ if $e\land (\un -f)=(\un-e)\land f=0$.  
  Projections $e$ and $f$ that are in  position $p'$ are said to be in position $p$ if, moreover, $e\land f=(\un -e)\land (\un-f)=0$. Let us remark that $e$ and $f$ are in  position $p'$ \ifff{} $LP(ef)=e$ and $RP(ef)=f$. Further, $e$ and $f$ are in position $p$ \ifff{} $RP(ef-fe)=1$
  (see \cite{Berberian}).

  \begin{pr}\label{position}
Suppose that $e$ and $f$ in  $A$ are projections in  position $p$. Then
\[ RP(\kol e fe)=e\,.\]
\end{pr}

\begin{proof}
 Denote  $g=RP(x)$.  In other words,   
\[ R(\{\kol e fe\})=\kol g A\,.\]
As $\kol e A\subset R\{\kol e fe\}$ we infer that $e\ge g$. Put $z=e-g$.
The proof will be completed if we show that $z=0$. Suppose, for a contradiction, that $z\not= 0$. Using the fact that $\kol g\in R(\{\kol e fe\})$,
we have
\[ \kol e fe z= \kol e fe-\kol e feg=\kol e fe-\kol e fe=0\,.\]
Consequently, $z\in R(\{\kol e fe\})$. This, together with the inequality
$z\le e$, means  that  
\[ z\in R(\{\kol e f\})=(\un-RP(\kol e f))A=\kol fA\,.\]
Therefore, $z\le   \kol f\land e = 0\,.$
\end{proof}

\begin{pr}\label{two projections}
Let $e$ and $f$ be projections in $A$ in position $p$. Then $AW^\ast(e,f)$ 
lies in an $AW^\ast$-subalgebra isomorphic to $M_2(C)$, where $C$ is an abelian
\AW. Moreover, when identifying $M_2(C)$ with $C(X, M_2(\cc))$, where $X$ is the spectrum of $C$, we can arrange for 
\begin{equation}\label{newstar} e(x)= \left(  \begin{array}{cc}
  1 & 0  \\
     0  & 0
\end{array}
\right)
\mbox{ and } f(x)= \left(  \begin{array}{cc}
  a(x) & \sqrt{a(x)-a^2(x)}  \\
   \sqrt{a(x)-a^2(x)}    & 1-a(x)
\end{array}
\right)\,,
\end{equation}
where $a(x)$ is a continuous function on $X$ with values in $[0,1]$.
\end{pr}

\begin{proof}
Put $x=\kol efe.$ Then $x^\ast x= ef\kol efe\in eAe\,.$
Consider the  polar decomposition 
\[ x= uh\,,\]
where $h=(x^\ast x)^{1/2}\in eAe$, and $u$ is a partial isometry with
\[ u^\ast u = RP(x) \qquad uu^\ast=LP(x)\,.\]
As we know from \prref{position} 
\[ RP(x)=e \]
\[ LP(x)=RP(x^\ast)=RP(ef\kol e)=\kol e\,.\]
Therefore $u$ is a partial isometry with initial projection $e$ and final projection $\kol e$. By a standard argument  $u$ introduces a matrix unit  which organizes whole algebra $A$ as $M_2(eAe)$. Identifying $eAe$ with upper left corner 
of the corresponding matrix we can identify
\begin{equation}\label{troj} e=\left(  \begin{array}{cc}
  e & 0  \\
    0   & 0 
\end{array}
\right)  \qquad \kol e = \left(  \begin{array}{cc}
  0 & 0   \\
     0  & e 
\end{array}
\right)\qquad u= \left(  \begin{array}{cc}
  0 & 0  \\
    e   & 0 
\end{array}
\right)\,.
\end{equation} 
We shall find matrix representation of $f$. Suppose
\begin{equation}\label{ctverec} f = \left(  \begin{array}{cc}
  f_{11} & f_{12}  \\
   f_{21}    & f_{22} 
\end{array}
\right)\,.
\end{equation}
Then
\[ \left(  \begin{array}{cc}
  0 & 0  \\
   f_{21}    & 0
\end{array}
\right)= \kol e fe = uh = \left(  \begin{array}{cc}
  0 & 0  \\
    e   & 0 
\end{array}
\right)
\left(  \begin{array}{cc}
  h & 0  \\
     0  & 0 
\end{array}
\right)=
\left(  \begin{array}{cc}
  0 & 0   \\
     h  & 0 
\end{array}
\right)\,.
\]
It gives $f_{21}=f_{12}=h$. Expanding  the identity $f=f^2$ we obtain the following conditions:
\begin{eqnarray}\label{cross}
f_{11}^2+h^2&=&f_{11}\\
f_{11}h + hf_{22}&=&h\\
hf_{11}+f_{22}h &=& h\\
h^2+f_{22}^2 &=& f_{22}\,.
\end{eqnarray}
It implies that $f_{11}$ and $f_{22}$ commute with $h$ and so we have 
\[ h(f_{11}+f_{22}-e)=0\,.\]
We shall show that $f_{11}+f_{22}=e$. Put $y= f_{11}+f_{22}-e$. We can see that
\[ y\in R(\{h\})\subset R(\{x\})=(\un -e)A\,.\]
So $y\in (\un -e)A\cap eA=\{0\}$. Therefore $f_{11}+f_{22}=e$.
Set $C=AW^\ast(e,f_{11})$. Employing the previous identities   we have that $f_{22},h\in C$. Now (\ref{ctverec}) implies that $AW^\ast(e,f)$ is a $\ast$-subalgebra of $M_2(C)$. \\

     As $e$ can be identified with identity of $C$ and $f_{11}\ge 0$, (in particular $f_{11}$ is self-adjoint) identifications  (\ref{newstar}) follows.
\end{proof}

\begin{pr}\label{okor}
Let $e$ and  $f$ be projections in $A$. Then the algebra $AW^\ast(e,f)$ is contained in a  $AW^\ast$-subalgebra of $A$ isomorphic to \[B\oplus M_2(C)\,,\] where $B$ and $C$ are abelian \AW s. 
\end{pr}

\begin{proof}
Passing to hereditary subalgebra we can assume that $e\lor f=\un$. We set
\[ e_0=e-e\land f - e\land \kol f \qquad e_1=e\land f + e\land \kol f\]
\[ f_0=f-e\land f - \kol e\land  f \qquad f_1=e\land f + \kol e\land  f\,.\]
Then 
\[ \un = e\land f + e\land \kol f + \kol e\land f + \kol e \land \kol f + e_0\lor f_0\,.\]
Let us observe that   $e_0$ and $f_0$ are in position $p$ in the hereditary subalgebra \[(e_0\lor f_0)A(e_0\lor f_0)\,.\] 
The proof is completed by application of \prref{two projections}.
\end{proof}

\begin{theo}\label{isoclinic}
Let $e$ and $f$ be projections in a \AW{}  $A$ with $\nor{e-f}<1$ and $e\land f=0$. Then there is a projection $g$ in $A$ isoclinic to both $e$ and $f$ with the angle
\[ \al= \frac 12\sin^{-1}\nor{e-f}\,.\]
\end{theo}

\begin{proof}
As $\nor{e-f}<1$, we have that $e\land \kol f=\kol e\land f=0$. 
 Moreover, without loss of generality we can assume that $e\lor f=\un$. In that case $e$ and $f$ are in position $p$ and so they can live in an \AW{} isomorphic to
$C(X,M_2(\cc))$ for some compact Hausdorff  space $X$. Moreover, using
\prref{two projections}, 
we can represent $e$ and $f$ as matrix valued functions $e(x)$ and $f(x)$ 
on $X$ such that
\[ e(x)=\left(  \begin{array}{cc}
  1 & 0  \\
     0  & 0
\end{array} 
\right)  \mbox{ and } \qquad f(x)=\left(  \begin{array}{cc}
  a(x) & \sqrt{a(x)-a(x)^2}  \\
 \sqrt{a(x)-a(x)^2}      &1-a(x) 
\end{array}
\right)\,,
\]
where $a(x)$ is a continuous function on $X$ with values in $[0,1]$ 
 Now we can proceed exactly as in the proof of analogous theorem for \vNa s (see surveys \cite[Theorem 5.3.5, p.130]{qmt},\cite{Maeda}).  It is shown there that the desired isoclinic projection $g\in C(X, M_2(\cc))$ is given by the following formula

\[ g(x)= \left(  \begin{array}{cc}
  \l & \sqrt{\l-\l^2}\,\, \omega(x) \\
    \sqrt{\l-\l^2}\,\, \omega(x)   & 1-\l
\end{array}
\right)\,,\]
where $\l$ and $\omega(x)$ are specified as follows: $\l =\cos^2 \al$; 
\[ \omega(x)=e^{i\cos^{-1}\biggl(\sqrt{\frac 1{a(x)}-1}\, \frac{\l-\frac 12}{\l-\l^2}\biggr)}\,,\]
if $a(x)\not=0$; and $\omega(x)=1$ otherwise.

\end{proof}

\begin{lem}\label{halving}
 Given a projection $e$ in $A$ there are projections $p,q,r$ in $A$ \st{}
\begin{enumerate}
\item $p+q+r=e$
\item $p\sim q$
\item $r$ is abelian.
\end{enumerate}
\end{lem}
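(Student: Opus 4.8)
The plan is to isolate the ``non‑abelian part'' of $e$ by a maximality argument and to recognise whatever is left over as abelian. First I would consider the collection $\mathcal F$ of all families $\{(p_\alpha,q_\alpha)\}_{\alpha\in I}$ of nonzero projections dominated by $e$ such that the whole system $\{p_\alpha\}\cup\{q_\alpha\}$ is orthogonal and $p_\alpha\sim q_\alpha$ for every $\alpha$. Ordered by inclusion, $\mathcal F$ satisfies the hypotheses of Zorn's lemma, since the union of a chain of such families is again such a family; so I would fix a maximal element $\{(p_\alpha,q_\alpha)\}$. Because $P(A)$ is a complete lattice and the system is orthogonal, the suprema $p=\bigvee_\alpha p_\alpha$ and $q=\bigvee_\alpha q_\alpha$ exist, are orthogonal, and satisfy $p+q\le e$; I would then set $r=e-p-q$, so that (i) holds by construction.

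The second step is to check $p\sim q$. Here I would invoke the additivity of equivalence in Baer $\ast$-rings: choosing partial isometries $v_\alpha$ with $v_\alpha v_\alpha^\ast=p_\alpha$ and $v_\alpha^\ast v_\alpha=q_\alpha$, the orthogonality of the initial and of the final projections guarantees the existence of a partial isometry $v$ extending every $v_\alpha$ with $vv^\ast=p$ and $v^\ast v=q$, so that $p\sim q$ and (ii) holds. This is standard in the $AW^\ast$ setting and I would simply cite \cite{Berberian}.

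The decisive step is (iii), that $r$ is abelian, and I would argue by contradiction. If $rAr$ were not commutative, then — since in an $AW^\ast$-algebra every self-adjoint element is a norm limit of linear combinations of its spectral projections, all of which lie in the algebra — not all projections dominated by $r$ could commute; so there would be projections $f,g\le r$ with $fg\ne gf$. Applying \prref{okor} to $f$ and $g$ inside the $AW^\ast$-algebra $rAr$, I would embed $AW^\ast(f,g)$ into $B\oplus M_2(C)$ with $B,C$ abelian. Non-commutativity of $f$ and $g$ forces $C\ne 0$, and then the diagonal matrix units $\mathrm{diag}(1_C,0)$ and $\mathrm{diag}(0,1_C)$ are nonzero, orthogonal, mutually equivalent projections dominated by $f\lor g\le r$, hence orthogonal to every $p_\alpha$ and $q_\alpha$. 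Adjoining this pair to the maximal family would contradict maximality; therefore $r$ is abelian.

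I expect the main obstacle to be the bookkeeping in the second step: making precise that an orthogonal family of partial isometries assembles into a single partial isometry implementing the equivalence of the suprema, with all the support conditions under control, is exactly where the completeness of $P(A)$ and the Baer $\ast$-ring axioms enter. By contrast, the reduction in the third step is comparatively routine once \prref{okor} is in hand, as it converts non-commutativity directly into an explicit pair of equivalent matrix units.
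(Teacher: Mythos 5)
Your argument is correct, but it takes a genuinely different route from the paper's. The paper proves the lemma by central decomposition into types: it reduces to the cases where $A$ is properly infinite, of finite Type $II$, or homogeneous of finite Type $I$, halves $e$ outright in the first two cases (so that $r=0$), and in the Type $I_n$ case identifies $eAe$ with a homogeneous Type $I_l$ algebra, pairs off the $l$ equivalent orthogonal abelian projections summing to $e$, and leaves a single one of them as $r$ when $l$ is odd. You instead run an exhaustion argument: Zorn's lemma produces a maximal orthogonal family of equivalent pairs under $e$, the complete (orthogonal) additivity of equivalence in \AW s \cite{Berberian} assembles it into $p\sim q$, and \prref{okor} converts any failure of commutativity in $rAr$ into a pair of nonzero orthogonal equivalent projections dominated by $r$ --- namely the two diagonal projections of the $M_2(C)$ summand, which is nonzero since otherwise the containing algebra would be abelian and $f,g$ would commute --- contradicting maximality. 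Both proofs ultimately lean on orthogonal additivity of equivalence (the paper to glue the central summands, you to glue the maximal family), but yours bypasses the type classification entirely and instead reuses the two-projection structure theorem already established in Section~2; the price is that it is purely existential, whereas the paper's version exhibits $r$ concretely as one abelian projection from the homogeneous decomposition. If you write this up, make explicit that the copy of $B\oplus M_2(C)$ produced by \prref{okor} sits inside the hereditary subalgebra $(f\lor g)A(f\lor g)\subset rAr$, which is what guarantees that the two new projections are dominated by $r$ and hence orthogonal to every member of the maximal family.
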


\begin{proof}
Let us observe that using orthogonal additivity of \AW s \cite[Corollary 1, p.80]{Berberian} and  the fact that  sum of  very orthogonal family of abelian projections is an abelian projection again \cite[Proposition 8,p. 91]{Berberian} the following holds: If  there is a central partition of unity $\sum_\al z_\al =\un$ \st{}  the present lemma holds for every $z_\al A$, then it holds for whole of $A$. Employing decomposition  of $A$ into Types \cite{Berberian}, we can reduce the proof to the case where $A$ is properly infinite, finite Type $II$ or homogeneous finite Type $I$.
If $A$ is properly infinite  or of finite Type $II_1$, then each projection
$e$ can be halved: $e=p+q$, where $p\sim q$, and so the Lemma~\ref{halving} holds with $r=0$.
It remains to prove the statement under  condition that $A$ is of finite Type $I_n$, $n\ge 2$. The hereditary subalgebra $eAe$ is finite again
\cite[Proposition 1, p. 89]{Berberian}. Moreover it remains to be of Type $I$ for the following reason: Suppose, on the contrary that   $eAe$ is not of Type $I$. Then there is a nonzero central  projection $z$ in $eAe$ \st{} $z$ majorizes no nonzero abelian projection. But in Type $I$ algebra every nonzero projection majorizes some
nonzero abelian projection  by \cite[Lemma 1, p. 113]{Berberian}
 Therefore $eAe$ decomposes into finite direct sum of subalgebras of Type $I_k$,
where $k\le n$. (This is due to the fact that Type $I_n$ algebra cannot contain more than $n$ equivalent nonzero abelian   projections.) Working in the hereditary subalgebra $eAe$ we can again pass to its  homogeneous direct summands. Consequently, we can assume without loss of generality that $eAe$ is of Type $I_l$, where $l\in \nn$.  There are  equivalent orthogonal abelian projections $\seq gl$ with sum $e$. If $l$ is even, then we can halve $g$ into $g_1+\cdots +g_{l/2}$ and $g_{l/2+1}+\cdots+g_l$. If $l$ is odd, then we can decompose  $e$ 
into sum of  abelian projection $g_1$ and two equivalent orthogonal projections $g_2+\cdots +g_{\frac{l-1}2+1}$ and $g_{\frac{l-1}2+2}+\cdots +g_{l}$.
This completes the proof.
\end{proof}

\begin{lem}\label{2by2}
Let $A$ has no Type $I_2$ direct summand. Suppose that $B$ is a \Csa{} of $A$
$\ast$-isomorphic to $M_2(\cc)$. Then $B$ is a subalgebra of the direct sum $C\oplus D$, where 
\begin{enumerate}
\item $C$ is either zero or a copy of $M_4(\cc)$,
\item $D$ is either zero or a subalgebra of another algebra that is isomorphic to $M_3(\cc)$. 
\end{enumerate} 
\end{lem}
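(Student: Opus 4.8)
The plan is to exploit the relative commutant of $B$ and to split $\un_B$ by a projection lying in it. Write $p=\un_B$ and fix matrix units $e_{ij}$ for $B\cong M_2(\cc)$, so that $e=e_{11}$ and $f=e_{22}$ are orthogonal, $v:=e_{21}$ is a partial isometry with $vv^\ast=f$, $v^\ast v=e$, and $p=e+f$. Compressing to $p$ identifies $pAp$ with $M_2(eAe)$, and the commutant of $B$ inside $pAp$ is a copy of $N:=eAe$. Any projection of the form $\un_C:=c+vcv^\ast$ with $c\in N$ a projection then commutes with $B$, is orthogonal to $\un_D:=(e-c)+v(e-c)v^\ast$, and $\un_C+\un_D=p$. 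Since $\un_C,\un_D$ are central in $B'$, it suffices to choose $c$ so that $\un_C B$ sits in a copy of $M_4(\cc)$ and $\un_D B$ sits in a copy of $M_3(\cc)$; then $B=\un_C B\oplus\un_D B\subseteq C\oplus D$ is immediate.

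First I would choose $c$ by halving $e$ inside the $AW^\ast$-algebra $N=eAe$. By \lemref{halving} applied in $N$ there are projections with $e=(p_e+q_e)+r_e$, $p_e\sim q_e$, and $r_e$ abelian in $N$; set $c=p_e+q_e$ and $d=r_e$. On the $\un_C$-part the equivalence $p_e\sim q_e$ together with $v$ produces four mutually orthogonal, mutually equivalent projections $p_e,\,q_e,\,vp_ev^\ast,\,vq_ev^\ast$ summing to $\un_C$; the sixteen matrix units they generate span a copy $C$ of $M_4(\cc)$, and a direct check shows that $\un_C e$, $\un_C f$ and $\un_C v$ all lie in $C$, so $\un_C B\subseteq C$ (this realizes the embedding $M_2\hookrightarrow M_2\otimes M_2$). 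On the $\un_D$-part, since $r_e$ is abelian in $N$ and $r_e\le e$, the corner $r_eAr_e=r_eNr_e$ is abelian, so $d$ is an abelian projection of $A$; thus $\un_D B\cong M_2(\cc)$ has abelian diagonal corners $d$ and $vdv^\ast$.

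The decisive step is to complete the $\un_D$-part to an $M_3(\cc)$, for which I must produce a projection $g\sim d$ with $g\le \un-\un_D=\un-d-vdv^\ast$; then $d$, $vdv^\ast$, $g$ together with connecting partial isometries span a copy $D$ of $M_3(\cc)$ containing $\un_D B$ as a corner. This is exactly where the hypothesis enters, and I expect it to be the main obstacle: I claim $d\precsim \un-d-vdv^\ast$. Were this false, generalized comparability would furnish a nonzero central projection $z_0\le c(d)\le c(e)$ with $z_0(\un-d-vdv^\ast)\precsim z_0 d$ strictly; adding the three orthogonal pieces and using $vdv^\ast\sim d$ gives $z_0\precsim 3\,z_0 d$, where $z_0 d$ is a faithful abelian projection of $z_0A$. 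Hence $z_0A$ is of Type $I$, and on each homogeneous summand the estimate $z_0\precsim 3\,z_0 d$ forces Type $I_m$ with $m\le 3$; meanwhile $z_0e\sim z_0f$ being orthogonal and nonzero forces $z_0A$ to be non-abelian, i.e.\ $m\ge 2$. The absence of a Type $I_2$ direct summand leaves only $m=3$, and there a rank count gives $d\sim \un-d-vdv^\ast$ on $z_0$, contradicting strictness. Thus the required $g$ exists.

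Finally, the degenerate choices $c=0$ (forcing $C=0$) and $d=0$ (forcing $D=0$) account for the ``zero'' options in the statement. The remaining verifications—that the selected matrix units genuinely generate $M_4(\cc)$ and $M_3(\cc)$, and that $\un_C,\un_D$ commute with $B$—are routine computations in $M_2(N)$ that I would relegate to short lemmas. The conceptual weight of the proof lies entirely in the comparison $d\precsim\un-d-vdv^\ast$, where the exclusion of Type $I_2$ (and, trivially, Type $I_1$) is precisely the obstruction-removing ingredient.
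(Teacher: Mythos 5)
Your proof is correct and follows essentially the same route as the paper: both split $B$ via \lemref{halving} into a part whose diagonal projections halve (yielding the $M_4(\cc)$ summand) and an abelian remainder, and both produce the third equivalent abelian projection needed for the $M_3(\cc)$ part by reducing to homogeneous Type $I_n$ summands, excluding $n=1,2$, and comparing complements of equivalent finite projections. Your packaging of that last step as a contradiction via generalized comparability, and your transporting of the halving of $e$ to $f$ by the matrix unit $v$ (rather than halving $e$ and $f$ independently as the paper does), are only minor variants of the paper's direct argument.
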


\begin{proof}
Suppose that $B$ is an $AW^\ast$-subalgebra of $A$ $\ast$-isomorphic to $M_2(\cc)$. Then $B$ is generated by matrix units corresponding to two equivalent nonzero  orthogonal projections (atoms in $B$) $e$ and $f$. By \lemref{halving} we can find projections $e_1,e_2,e_3$ and $f_1,f_2,f_3$ \st{} $e=e_1+e_2+e_3$, $f=f_1+f_2+f_3$, $e_1\sim e_2$, $f_1\sim f_2$, and $e_3$ and $f_3$ are abelian. Then $e_1\sim e_2\sim f_1\sim f_2$ are orthogonal. If they are nonzero then the corresponding matrix unit allows us  to embed these projections into a subalgebra $C$ that is a  copy of $M_4(\cc)$. Let $D$ be a subalgebra generated by   $e_3$ and $f_3$. Suppose that $D$ is nonzero. We shall prove that there is a projection $h\le \un - e_3-f_3$, that is equivalent to $e_3$. 
This will complete the proof.  Let us put $z=c(e_3)=c(f_3)$. As $A$ has no direct summand  of Type $I_2$, we see that $z-e_3-f_3$ is nonzero, for otherwise
$z$ would be a  sum of two equivalent orthogonal abelian projections 
and this would induce  Type $I_2$ direct summand of $A$. For this reason we can work in subalgebra $zA$ in which $e_3$ is a faithful projection. In particular, there is no loss  of generality in assuming that $A$ is of Type $I$.
By considerations analogous to that in the beginning of the proof of  \lemref{halving}, we are able to reduce the argument to the case when  $A$ is homogeneous; that is, of  Type $I_n$, $n\ge 3$. Therefore $A$ contains three equivalent orthogonal faithful abelian projections $h_1\sim h_2\sim  h_3$. All are equivalent to   $e_3$. According to \cite[Proposition 5, p.106]{Berberian} 
if two finite projections in  a \AW{} are equivalent, then the same holds for their complements 
 Having this in mind  and using the fact that $h_1+h_2$ and $e_3+f_3$ are equivalent finite projections, we see that 
$\un-e_3-f_3$ contains an abelian projection equivalent to $h_3$ and therefore to $e_3$ as well. 
\end{proof}

\section{Quasi-linear functionals on \AW s}

\begin{df}
Let $A$ be an \AW. A mapping $\mu:A\to \cc$ is called quasi-linear functional if the following holds
\begin{enumerate}
\item $\mu${} is linear on any abelian $AW^\ast$- subalgebra of $A$. 
\item $\mu(x+iy)=\mu(x)+i\mu(y)$ for all self-adjoint elements $x,y\in A$.
\item $\mu$ is bounded on the unit ball of $A$.
\end{enumerate}
Moreover, we shall call $\mu$ self-adjoint if it takes real values on self-adjoint elements. We shall define a norm of a quasi-linear functional 
$\mu$ by
\[ \nor\mu=\sup\set{|\mu(x)|}{x\in A\,,  \|x\|\le 1}\,.\]
\end{df}
     
\begin{df}
A measure \ro{} on $A$ with values in a normed space $X$ is a bounded map $\mu: P(A)\to X$ satisfying the following condition:
\[ \ro(e+f)=\ro(e)+\ro(f)\]
whenever $e$ and $f$ are orthogonal projections in $A$. 
\end{df}

\begin{pr}\label{3.3}
Every complex measure $\ro{}$ on $A$ extends uniquely to a  quasi-linear functional $\mu$ on $A$. Moreover, if $\ro{}$ is real then $\mu{}$ is self-adjoint. 
\end{pr}

\begin{proof} The proof is the same as in \cite[Proposition 5.2.6, p.125]{qmt}. 
\end{proof}

\begin{pr}\label{3.5}
Suppose that $A$ is an \AW{} for which every quasi-linear functional is linear.
Then any bounded measure \ro{} on $P(A)$ with values in Banach space $X$ extends to a bounded linear operator $T$ from $A$ to $X$. 
\end{pr}

\begin{proof} The proof is the same as the proof of \cite[Theorem 5.2.4, p.123]{qmt}
\end{proof}

  We shall often use the following fact:
  
  \begin{pr}\label{konvex}
Let $A$ be an \AW. For any $0\le x\le \un$ in $A$ there are projections $(e_n)$
lying in the commutative \AW{} generated by $x$ \st{}
\[ x= \sum_{n=1}^\8 \frac 1{2^n}e_n\,.\]
\end{pr}

\begin{proof}
It holds for any \Ca{} enjoying the spectral axiom (see \cite[p.367]{Str-Sz}). 
\end{proof}

  
 



  \begin{pr}\label{local} 
Let $A$ be an \AW{} without Type $I_2$ direct summand. Then every quasi linear functional is linear on  every subalgebra of $A$ isomorphic to $M_2(\cc)$. 
\end{pr}

\begin{proof}
Any copy $B$ of $M_2(\cc)$ is embedded into matricial subalgebra of $A$ for which the classical \GT{} holds
 \cite{Gleason}. Therefore $\mu$ is linear on $B$. 
\end{proof}

  \begin{pr}\label{Lipschitz}
Any quasi linear functional  $\mu$ on an \AW{} $A$, that has no Type $I_2$ direct summand,
is Lipschitz on $P(A)$.
\end{pr}
  
  \begin{proof}
The proof is the same as in Theorem 5.3.8 in \cite{qmt}, we present it here  for the sake of completeness.  

 As $\|e-f\|\le 1$ for all projections $e$ and $f$, we can verify Lipschitz condition for a pair of projections with $\nor{e-f}<1$. Moreover, we can (by discarding $e\land f$) suppose that $e\land f=0$. Employing \thmref{isoclinic},
 we can find a projection $h$ isoclinic to both $e$ and $f$ with angle 
 $\al=\frac 12\sin^{-1}\nor{e-f}<\frac{\pi}4$. As we know, $C^\ast(e,h)$ and $C^\ast(f,h)$ are isomorphic to $M_2(\cc)$. Therefore $\mu{}$ is linear on these algebras (\prref{local}). In particular,
     \[ |\mu(e)-\mu(h)| \le \|\mu\| \|e-h\|=\nor{\mu}\sin\biggl(\frac 12\sin^{-1}\nor{e-f}\biggr)\le \nor{\mu}\cdot \nor{e-f}\,.\] 
Similarly,     
          \[ |\mu(f)-\mu(h)| \le \|\mu\| \|e-f\|\,.\]
          This gives
\[ |\mu(e)-\mu(f)|\le 2 \nor{\mu}\nor{e-f}\,.\]

\end{proof}

We shall need some notation. 
 Suppose that $X$ is a Stonean space and consider \AW{} $A=C(X, M_n(\cc))\simeq C(X)\otimes M_n(\cc)$. An  element $f\in A$ is called locally constant if it attains finitely many values. In other words, $f$ is locally constant \ifff{}    there is a partition \seq O k{}  of $X$ consisting of clopen sets \st{} $f$ is constant on each $O_i$. It is clear that the set $B$ of all locally constant functions is a  $\ast$-subalgebra of $A$. Moreover, $B$ is dense in $A$.
It follows directly from the fact that in  any \AW{} the set of self-adjoint elements with finite spectrum is dense in self-adjoint part of the algebra.
(Alternatively, it can be established by topological considerations.)


\begin{theo}\label{typeIn}
Let $A$ be of Type $I_n$, $n\ge 3$. Then any quasi linear functional on $A$ is linear. 
\end{theo}

\begin{proof}
Let $\mu$ be a quasi linear functional on $A$. It is enough to assume that 
$\mu$ is self-adjoint. By the structure theory of \AW s $A$ can be identified with $C(X,M_n(\cc))$, where $X$ is a Stone space. Let $B$ be a subalgebra of $A$ consisting of locally constant functions. First we show that $\mu$ is linear on  $B$. For this take $a,b\in B$. We can find a partition of $X$, consisting of clopen sets \seq Ok{}, \st{} both $a$ and $b$ are constant 
on each $O_i$. The set of all locally constant functions with this property forms a $\ast$ subalgebra $C$  of $B$ that can be identified with $k$-fold direct sum of the matrix algebras $M_n(\cc)$. By the classical \GT{}, $\mu$ is linear on $C$. Since $a,b\in C$ we can see that $\mu(a+b)=\mu(a)+\mu(b)$. This way we have established linearity of $\mu$ on $B$. Therefore, there is a unique bounded linear extension,  $\ro$, of $\mu|B$ to $A$ since $B$ is dense in $A$. It is clear  that $\ro$ coincides with  $\mu$ on $P(B)$. However, $\mu$ is uniformly continuous on $P(A)$ by \prref{Lipschitz} and $P(B)$ is dense in $P(A)$. Therefore,
$\ro$ and $\mu$ coincide on $P(A)$. However every quasi linear functional on
$A$ is uniquely determined by its values on projections. Therefore $\mu=\ro$
and the proof is completed. 
\end{proof}

\begin{cor}\label{two}
Let $A$ be an \AW{} without Type $I_2$ direct summand and let $\mu$ be a quasi linear functional on $A$.  Let $e,f$ be projections in $A$. Then $\mu$ is linear on subalgebra $AW^\ast(\un,e,f)$. 
\end{cor}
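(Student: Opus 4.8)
The plan is to localize via the structure theorem \prref{okor} and then to handle the abelian and the $M_2$-valued parts separately. By \prref{okor}, $AW^\ast(\un,e,f)$ is contained in an $AW^\ast$-subalgebra isomorphic to $B\oplus M_2(C)$ with $B$ and $C$ abelian; let $z_B$ and $z_M$ be the central units of the two summands. It suffices to prove additivity $\mu(a+b)=\mu(a)+\mu(b)$ for self-adjoint $a,b$, the general case following from clause (ii) of the definition of a quasi-linear functional. The basic device is that $\mu$ is additive on orthogonal self-adjoint elements: if $x=x^\ast$, $y=y^\ast$ and $xy=0$, then $yx=(xy)^\ast=0$ as well, so $x$ and $y$ commute and generate an abelian $AW^\ast$-subalgebra on which $\mu$ is linear, whence $\mu(x+y)=\mu(x)+\mu(y)$. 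Applying this to the orthogonal pieces $z_Ba$ and $z_Ma$ (and likewise for $b$), the additivity of $\mu$ on self-adjoint elements of $AW^\ast(\un,e,f)$ reduces to additivity on $B$---which holds because $\mu$ is linear on the abelian algebra $B$---together with additivity of $\mu$ on the summand $M_2(C)$. Thus everything comes down to showing that $\mu$ restricts to a linear functional on $M_2(C)$.

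The obstacle is that $M_2(C)\cong C(X,M_2(\cc))$, with $X$ the Stonean spectrum of $C$, is itself of Type $I_2$, so \thmref{typeIn} does not apply to it; this is the heart of the matter, and it is overcome by using that $M_2(C)$ lives inside the ambient algebra $A$, which has no Type $I_2$ direct summand. I would first prove linearity of $\mu$ on the dense $\ast$-subalgebra of locally constant functions of $C(X,M_2(\cc))$. For self-adjoint locally constant $a,b$, choose a finite partition $O_1,\dots,O_k$ of $X$ into clopen sets on which both are constant, and set $p_i=\chi_{O_i}\un$. Each set $\{\,v\,\chi_{O_i}\un : v\in M_2(\cc)\,\}$ is a copy of $M_2(\cc)$ inside $A$, on which $\mu$ is linear by \prref{local} (which, through \lemref{2by2}, embeds each such $M_2(\cc)$ into a matricial subalgebra of $A$ where classical \GT{} holds). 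Since $a,b$ are constant on each $O_i$, the pieces $p_ia$ and $p_ib$ lie in this copy, and $a=\sum_i p_ia$, $b=\sum_i p_ib$ are decompositions into mutually orthogonal self-adjoint elements. Combining the orthogonal additivity above with block-wise linearity yields
\[ \mu(a+b)=\sum_i\mu(p_ia+p_ib)=\sum_i\bigl(\mu(p_ia)+\mu(p_ib)\bigr)=\mu(a)+\mu(b)\,, \]
so $\mu$ is linear on the locally constant functions.

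It remains to pass from the dense subalgebra of locally constant functions to all of $M_2(C)$ by continuity. The restriction of $\mu$ to $M_2(C)$ is again a quasi-linear functional, hence by \prref{konvex} it is determined by its values on projections. Let $\ro$ be the unique bounded linear functional on $M_2(C)$ extending $\mu$ on the locally constant functions (it exists because these are dense). Then $\ro$ and $\mu$ agree on the projections of the locally constant functions, which are dense in $P(M_2(C))$; since $P(M_2(C))\subseteq P(A)$ and $\mu$ is Lipschitz on $P(A)$ by \prref{Lipschitz}---here one uses that $A$, not $M_2(C)$, is free of Type $I_2$ summands---the map $\mu$ is uniformly continuous on $P(M_2(C))$ and therefore agrees with $\ro$ on all of $P(M_2(C))$. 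As both $\mu|_{M_2(C)}$ and $\ro$ are quasi-linear functionals determined by their values on projections, they coincide, so $\mu$ is linear on $M_2(C)$. Together with the reduction of the first paragraph, this establishes linearity of $\mu$ on $AW^\ast(\un,e,f)$.
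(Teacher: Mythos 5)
Your proof is correct and follows essentially the same route as the paper: reduce via \prref{okor} to an abelian summand plus $C(X,M_2(\cc))$, establish linearity on locally constant functions using \prref{local} (which is where the absence of a Type $I_2$ summand of the ambient $A$ enters), and pass to the closure via the Lipschitz estimate of \prref{Lipschitz} on projections together with the fact that a quasi-linear functional is determined by its values on projections. The paper simply says ``proceed verbatim as in the proof of Theorem~\ref{typeIn}'' for these last steps; you have written them out, including the orthogonal-additivity device for gluing the clopen blocks, which the paper leaves implicit.
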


\begin{proof}
First we show that $\mu$ is linear on $AW^\ast(e,f)$.  By  \prref{okor}  there is no loss in assuming that $AW^\ast(e,f)$ is a direct sum of abelian algebra
and an algebra $D$ that can be identified with $C(X,M_2(\cc))$, where $X$ is a Stonean space. It is enough to establish linearity of $\mu$  on the latter direct summand. For this we can proceed verbatim like in the proof of \thmref{typeIn}. 
Consider a subalgebra $B\subset D$  of locally constant functions in $C(X, M_2(\cc))$. Given two   elements $a$ and $b$ of $B$ we can embed them into the direct sum of copies of $M_2(\cc)$. By \prref{local}, $\mu$ is linear on this 
subalgebra. It implies  that $\mu(a+b)=\mu(a)+\mu(b)$. Then rest  is  the same as in  the proof of \thmref{typeIn}. \par
When $\mu$ is linear on $AW^\ast(e,f)$, then it must be linear on $AW^\ast(\un, e,f)$ since $AW^\ast(\un, e,f)=\cc \un +  AW^\ast(e,f)$.
\end{proof}
 

\section{\DT{} for \AW s}

   The next proposition  can be proved in the same way as 
   in \cite[Theorem 8.1.1, p. 255]{qmt} for von Neumann algebras. In fact it holds not only for  \AW s  but for all \Ca s of real rank zero. 
   
   \begin{pr}\label{Jordan}
Any bounded linear map $\Phi: A\to B$ between \AW s  that preserves   projections (that is, $\Phi(P(A))\subset P(B)$) is a Jordan $\ast$-homomorphism.  
\end{pr}

 Following terminology in \cite{Active} we say that a map between projection lattices of \AW s  is a {\bf COrtho}-morphism if it preserves orthocomplementations and suprema of arbitrary projections. Then it preserves order, unit, and orthogonality of projections. 
 
 We now proceed to the main theorem.
 
 \begin{theo}\label{main}
Let $A$ be an \AW{} without Type $I_2$ direct summand and $B$ be an \AW. Let $\f:P(A)\to P(B)$ be a  {\bf COrtho}-morphism. Then $\f$ extends to a Jordan $\ast$-homomorphism $\Phi: A\to B$. 
\end{theo}
   
    \begin{proof}
First we show that \f{} induces a bounded measure on $P(A)$ with values in $P(B)$.
Indeed, let us take two orthogonal projections $e$ and $f$ in $A$. Then $\f(e)$ and $\f(f)$ are orthogonal. As \f{} preserves suprema we have
\[ \f(e+f)= \f(e\lor f)= \f(e)\lor \f(f)=\f(e)+\f(f)   \,.\]
Therefore \f{} is a bounded measure on $P(A)$. Let us take two projections $e$ and $f$ in $P(A)$. By \corref{two}  every quasi linear functional on $G=AW^\ast(\un,e,f)$ is linear and so, by \prref{3.5}, the restriction of $\f$ to this  algebra extends to a bounded  operator, say $T$, from $G$ into $B$.
By \prref{Jordan},  $T$ is a Jordan $\ast$-homomorphism. As any Jordan $\ast$-homomorphism preserves triple products we have 
\[ \f((\un - 2e)f(\un -2e))= T((\un - 2e)f(\un -2e))=\]\[ =(\un-2T(e))T(f)(\un-2T(e))
=(\un-2\f(e))\f(f)\f(\un-2\f(e))\,.\]
But, according  to deep result of Heunen and Reyes \cite[Theorem 4.6]{Active} 
the above equality is equivalent to the fact that  $\f$ extends to a Jordan $\ast$-homomorphism between $A$ and $B$. 
\end{proof}

Next we  extend \DT{} to $AW^\ast$-algebras. 
A map $\f: P(A)\to P(B)$ between projection lattices of \AW s  $A$ and $B$  
is called orthoisomorphism if it is a bijection preserving orthogonality in both directions, in the sense that $ef=0$ \ifff{} $\f(e)\f(f)=0$. Let us remark that every orthoisomorphism is a {\bf COrtho}-morphisms.\\

The following theorem generalizes the \DT.\\

\begin{theo}\label{Dye}
Let $\f:P(A)\to P(B)$ be an orthoisomorphism between projections lattices of \AW s $A$ and $B$, where $A$ has no Type $I_2$ direct summand. Then there is a Jordan $\ast$-isomorphism $\Phi: A\to B$ that extends $\f$. 
\end{theo}   
   
   \begin{proof}
 As \f{} is a {\bf COrtho}-morphism,   we have by  \thmref{main} that there is a Jordan $\ast$-homomorphism $\Phi$ from $A$ to $B$ extending \f{}. It remains to show that $\Phi$ is an isomorphism. Every Jordan $\ast$-homomorphism has a closed range. This, together with the fact that image of $\Phi$ contains $P(B)$ that generates $B$ as a Banach space, we infer that $\Phi$ is surjective. Now we establish  the injectivity of $\Phi$. Using the fact that 
kernel of $\Phi$ is a Jordan $\ast$-ideal generated linearly by  positive elements, we can see that for proving injectivity it suffices to show that
$\Phi$ is nonzero on every element $0\le x \le \un$.  It follows  from \prref{konvex} that any such $x$ dominates a nonzero positive  multiple $\l e$ of
a projection $e$. Then $\Phi(\l e)=\l\Phi(e)$ is nonzero by the hypothesis. As $\Phi$ preserves the order, we obtain that $\Phi(x)$ must be nonzero. 
 \end{proof}
   
\begin{df}   
By a quasi Jordan $\ast$-homomorphism  between \AW s $A$ and $B$ we mean a map
$\Phi: A\to B$ that satisfies the following conditions:
\begin{enumerate}
\item $\Phi$ preserves the $\ast$ operation; that is, $\Phi(a^\ast)=\Phi(a)^\ast$ 
for all $a\in A$.
\item $\Phi$ is a Jordan $\ast$-homomorphism on every abelian $AW^\ast$-subalgebra $C$ of $A$. That is, $\Phi$ is linear on $C$ and $\Phi(a^2)=\Phi(a)^2\,$ for every $a\in C$.
\item $\Phi(a+ib)=\Phi(a)+i\Phi(b)$ for all self-adjoint $a,b\in A$.
\end{enumerate}
Moreover we shall call a quasi Jordan $\ast$ homomorphism normal if it preserves
increasing nets of projections, that is if $\Phi(e_\al)\nearrow \Phi(e)$ in $P(B)$ whenever $e_\al\nearrow e$ in $P(A)$. Finally, by a quasi Jordan $\ast$-isomorphism we understand a quasi  Jordan $\ast$-homomorphism that is a bijection and  whose inverse is again a   quasi  Jordan $\ast$-homomorphism. 
\end{df}

\begin{theo}\label{linJordan}
Any normal quasi Jordan $\ast$-homomorphism $\Phi: A\to B$, where $A$ and $B$ are \AW s, $A$ not having Type $I_2$ direct summand, is a Jordan $\ast$-homomorphism.  
Moreover, any quasi Jordan $\ast$-isomorphism $\Phi: A\to B$ is a Jordan $\ast$-isomorphism.
\end{theo}

\begin{proof}
First we show that $\Phi$ restricts to a {\bf Cortho}-morphism $\f$ between projection lattices.
 For this it is enough to establish that $\Phi$ preservers suprema of two elements (see e.g. \cite[Lemma 3.2]{Active}). Let 
us take projections $e$ and $f$ in $A$. As $\Phi$  preserves order we have that $\Phi(e)\lor \Phi(f)\le \Phi(e\lor f)$. 
The sum $e+f$ is a self-adjoint element and so $AW^\ast(e+f)$ is abelian algebra  isomorphic to some $C(X)$, where $X$ is Stonean. 
From this one can deduce that there is an increasing sequence $(h_n)$ of projections in $AW^\ast (e+f)$ \st{} $h_n\nearrow RP(e+f)=e\lor f$ and $e+f\ge \frac 1n\, h_n$ for each $n$.
 Then $\Phi(h_n)\nearrow \Phi(e\lor f)$. Observe further that working in \AW{} generated by two projections $e$ and $f$ we have linearity of $\Phi$ on this subalgebra (see the proof of Theorem~\ref{main})  and so we have  that $
\Phi(e+f)=\Phi(e)+\Phi(f)$.  Therefore,  $\Phi(e+f)=\Phi(e)+\Phi(f)\ge \frac 1n \Phi(h_n)$. It implies that 
       $\Phi(h_n)\le RP(\Phi(e)+\Phi(f))=\Phi(e)\lor \Phi(f)$. Therefore $\Phi(e\lor f)\le \Phi(e)\lor \Phi(f)$, giving the reverse inequality. 
 By \thmref{main},
$\f$ extends to a bounded linear map $\Psi$ from $A$ to $B$. Since $\Phi$ and $\Psi$ coincide on
$P(A)$,  they have to be equal. \\

To prove the second statement, let us observe first that $\f$ is  an orthoisomorphism. As $\f$ is injective it will suffice to show that $\Phi(P(A))=P(B)$. To this end consider a projection $p$ in $P(B)$. There is a self-adjoint element $x\in A$ \st{} $\Phi(x)=p.$  Then $\Phi(x)=\Phi(x)^2=\Phi(x^2)$, giving by injectivity of $\Phi$, that  $x=x^2$. Therefore $x$ is a projection. 
By  Theorem~\ref{Dye} $\f$ extends to a Jordan $\ast$-isomorphism $\Psi$ 
between $A$ and $B$. As above $\Phi=\Psi$. 
 \end{proof}

In  conclusion of this paper, we apply our main result to show that structure of abelian \Csa s of a \AW{} algebra determines  Jordan structure of $A$.

Let $Abel(A)$  be  a set of all  abelian \Csa{}  of a unital \Ca{} $A$ that contain the unit of $A$.
 When ordered by inclusion, we obtain the posets that play an important role in foundations of physics \cite{Deep}. 
The following application of our main results allows one to identify order isomorphisms of the structure of abelian subalgebras with  Jordan $\ast$-isomorphisms. Let us recall that an order isomorphism between two posets is a bijection preserving the order in both directions. \\

\begin{theo}
Let $A$ be an \AW{} without Type $I_2$ direct summand and $B$ be another \AW.  Then for any 
order isomorphism $\f: Abel(A)\to Abel(B)$ 
there is a unique Jordan $\ast$-isomorphism $\Phi: A\to B$  \st{}
     \[ \Phi(C)=\f(C)\]
     for all $C\in Abel(A)$. 
\end{theo}

\begin{proof}
It was shown in \cite{Ham-Abel} that any order isomorphism  between $Abel(A)$ and $Abel(B)$ is implemented in the above sense by a quasi Jordan $\ast$-isomorphism between $A$ and $B$.
 The result then follows from \thmref{linJordan}. 

\end{proof}

{\bf Acknowledgment:}  This work was supported by  the ``Grant Agency of the Czech
Republic",
  Grant No. P201/12/0290  ``Topological and geometrical properties of Banach spaces and operator algebras".

\end{document}